\documentclass[11pt]{amsart}

\usepackage{macros}
\usepackage{mathtools}

\def\co{\colon\thinspace}

\usepackage{macros}
\usepackage{mathtools}
\usepackage{tikz-cd}

\def\co{\colon\thinspace}
\DeclareMathOperator{\id}{id}

\def\op{\mathrm}

\def\calgd{\mathsf{C}^*}
\def\fn{\mathfrak{n}}

\def\xto{\xrightarrow}


\linespread{1.3}

\title{Parametrized $\L8$ Spaces}

\author{Ryan Grady}
\address{Perimeter Institute for Theoretical Physics\\Waterloo, Ontario\\Canada N2L 2Y5}
\email{rgrady@perimeterinstitute.ca}
\thanks{The author was partially supported by the National Science Foundation under Award DMS-1309118.}

\subjclass[2010]{14D23 (primary), 58H15 and 53D17 (secondary)}

\setcounter{tocdepth}{1}

\begin{document}

\maketitle


\tableofcontents

\section{Introduction and Overview}

In this note, we generalize the notion of $\L8$ space by allowing sheaves of $\L8$ algebras over any nilpotent dg locally ringed space.  

Since at least the work of Lada and Stasheff \cite{LadaJim} or Lada and Markl \cite{LadaMarkl}, $\L8$ algebras have played an important role in both physics and deformation theory.  Motivated by using derived geometry to build $\sigma$-models, Costello defined $\L8$ spaces in \cite{CosWG2}. In joint work with Gwilliam \cite{GG1, GGLoop, GGAlgd}, we used similar techniques to clarify the relationship between $\L8$ spaces and smooth derived stacks, as well as, to provide several further examples, e.g., the $\L8$ space associated to a Lie algebroid.

In the existing literature, an $\L8$ space is a sheaf of curved $\L8$ algebras over the dg manifold $(M, \Omega^\ast_M)$ corresponding to a smooth manifold $M$; in the curved setting, we must also remember the nilpotent ideal $\Omega^{\ge 1}_M$. One should think of an $\L8$ space as a flat family of formal moduli problems glued together over the smooth manifold $M$. Via their associated Maurer-Cartan functors (see \cite{GGLoop}), $\L8$ spaces provide examples of derived stacks.  One can think of them as smooth (and somewhat watered-down) analogues of the {\it formal derived stacks} of Calaque, Pantev, To\"{e}n, Vaqui\'{e}, and Vezzosi \cite{CPTVV}; in particular, compare section 2 of the recent survey of Pantev and Vezzosi \cite{PanGab}. (See also Lurie's work, especially \cite{LurieICM, LurieDAGX}.)

Recently, Tu \cite{Tu1, Tu2} has given another notion of $\L8$ space in his approach to derived geoemetry and the construction of fundamental cycles.  This definition is very similar to that of Costello, but critically does not require that the curvature term vanishes modulo a nilpotent ideal (see Remark 3.3.4 of \cite{Tu1} for the precise comparison).  One of our aims is to quantize various mapping spaces using the BV formalism (a la Costello \cite{Cos1}), for which we need the curvature to be nilpotent.

In the present work, we enlarge the definition of $\L8$ space to include other types of families of formal moduli problems.  By changing the underlying parametrizing dg manifold, we obtain smooth families, holomorphic families, or more generally families parametrized along the leaves of a foliation.  The details are provided in Section \ref{sect:param}. We discuss the associated derived stack in Section \ref{sect:stack}.

In section \ref{sect:example}, we lay out a slew of examples. In particular, we show that Yu's construction of the Dolbeault dga of a formal neighborhood \cite{Yu15} defines a holomorphic $\L8$ space.  In the final section (Section \ref{sect:algd}), we use the data of a Lie algebroid $L \to T_M$ to construct an $\L8$ space over the cochains of $L$.  We show that the characteristic classes of this $\L8$ space recover the primary invariants of the Lie algebroid $L$.

\subsection{Notations and conventions}

We work throughout in characteristic zero.
We work cohomologically, so the differential in any complex increases degree by one.

For $A$ a cochain complex, $A^\sharp$ denotes the underlying graded vector space. If $A$ is a cochain complex whose degree $k$ space is $A^k$, then $A[1]$ is the cochain complex where $A[1]^k = A^{k+1}$. We use $A^\vee$ to denote the graded dual.

For $X$ a smooth manifold, we use $T_X$ to denote its tangent bundle as a vector bundle and $TX$ to denote the total space of that vector bundle. We use $T^\vee_X$ to denote the cotangent bundle.

If $f \co X \to Y$ is a map of smooth manifolds and $V$ a vector bundle, then we use $f^{-1} V$ to denote the pullback vector bundle. Similarly, for $\cF$ a sheaf on $Y$, we use $f^{-1}\cF$ to denote the pullback sheaf, simply as a sheaf of sets or vector spaces. We reserve the notation $f^\ast \cV$ for the case where $\cV$ is a sheaf of dg $\Omega^\ast_Y$-modules, and $f^\ast \cV$ denotes the sheaf of dg $\Omega^\ast_X$-modules obtained from $f^{-1} \cV$ by extending scalars.  

For $\rho \co L \to T_X$ a Lie algebroid, we let $J_L$ denote the sheaf of $L$-jets as defined in Appendix \ref{sect:Ljets}.  In particular, for the Lie algebroid $\op{Id} \co T_X \to T_X$, we recover (smooth sections of) the infinite jet bundle $J \to X$.

\subsection*{Acknowledgements}
This work benefitted from many discussions with Kevin Costello and Owen Gwilliam and I thank them for their input and suggestions. The work of Damien Calaque and collaborators has similarly provided substantial guidance. I would also like to thank Jim Stasheff for feedback and interest at various stages of this project.

The author gratefully acknowledges support from the Hausdorff Research Institute for Mathematics, especially the trimester program ``Homotopy theory, manifolds, and field theories'' at which some of the research for this paper was performed. He also would like to thank the Perimeter Institute for Theoretical Physics for its hospitality and working conditions during the writing of parts of the current paper. Research at Perimeter Institute is supported by the Government of Canada through Industry Canada and by the Province of Ontario through the Ministry of Economic Development and Innovation.

\section{Preliminaries}

We recall our conventions for $\L8$ algebras and Lie algebroids as the former are a central object of study in the present work and the latter will appear in several examples. The expert can safely skip this section.

\subsection{Curved $\L8$ algebras}

\begin{definition}
Let $A$ be a commutative dg algebra with a nilpotent dg ideal $I$. A {\em curved $\L8$ algebra over $A$} consists of
\begin{enumerate}
\item[(1)] a locally free, $\ZZ$-graded $A^\sharp$-module $V$ and
\item[(2)] a linear map of cohomological degree 1
\[
d: \Sym (V[1]) \to  \Sym (V[1]),
\]
\end{enumerate}
where $\Sym (V[1])$ indicates the graded vector space given by the symmetric algebra over the graded algebra $A^\sharp$ underlying the dg algebra $A$. Further, we require 
\begin{enumerate}
\item[(i)] $d^2 = 0$,
\item[(ii)] $(\Sym (V[1]),d)$ is a cocommutative dg coalgebra over $A$ (i.e., $d$ is a coderivation), and
\item[(iii)] modulo $I$, the coderivation $d$ vanishes on the constants (i.e., on $\Sym^0$).
\end{enumerate}
\end{definition}

To reduce notation, we use $C_\ast(V)$ to denote the cocommutative dg coalgebra $(\Sym (V[1]),d)$;  we call it the \emph{Chevally-Eilenberg homology complex} of $V$, as it extends the usual notion of Lie algebra homology. (Note, in contrast to some conventions, for us, the homology complex of any ordinary Lie algebra $\fg$ is concentrated in {\it nonpositive} degrees.)

There is also a natural Chevalley-Eilenberg \emph{cohomology} complex $C^\ast(V)$. It is $(\csym (V^\vee[-1]),d)$, where the notation $\csym (V^\vee[-1])$ indicates the completed symmetric algebra over the graded algebra $A^\sharp$ underlying the dg algebra $A$.  The differential $d$ is the ``dual" differential to that on $C_*(V)$. In particular, it makes $C^*(V)$ into a commutative dg algebra, so $d$ is a derivation.

\begin{definition}
A {\em map of curved $\L8$ algebras} $\phi: V \to W$ is a map of cocommutative dg coalgebras $\phi_*: C_*(V) \to C_*(W)$ respecting the cofiltration by $I$. A map $\phi$ is a {\em weak equivalence} if its linear component $\phi_1 \co V \to W$ is a quasi-isomorphism of cochain complexes.
\end{definition}

\subsection{Lie Algebroids}

Standard references for Lie algebroids include Mackenzie \cite{Mackenzie} and Rinehart \cite{Rinehart}.

\begin{definition}
A {\it Lie algebroid} on a smooth manifold $X$ is a vector bundle $L \to X$ equipped with the structure of a Lie algebra on its sheaf of smooth sections and an {\it anchor map} $\rho \co L \to T_X$, which is a map of vector bundles, such that
\begin{enumerate}
\item the map on sections induced by $\rho$ is a map of Lie algebras and
\item for $X,Y \in \Gamma (L)$ and $f \in C^\infty_X$, we have the Leibniz rule
\[
[X, fY] = f [X,Y] + (\rho(X) f) Y.
\]
\end{enumerate}
\end{definition}

To any Lie algebroid $\rho \co L \to T_X$ there is an associated commutative dg algebra. We will call it the {\it Chevalley-Eilenberg cohomology complex of $L$} and denote it $\calgd(L)$, because it is modeled on the Chevalley-Eilenberg cochain complex of a Lie algebra. It is often also called the {\it de Rham complex of $L$} because for the Lie algebroid $\id: L = T \to T$, we have $\calgd(L) = \Omega^*_X$, the usual de Rham complex. 

The complex is constructed as follows. Let $L^\vee$ be the dual vector bundle to $L$ and consider the map $d_L \co \Gamma (X,\Lambda^m L^\vee) \to \Gamma (X,\Lambda^{m+1} L^\vee)$ given by
\begin{align*}
\MoveEqLeft[8] (d_L \alpha)(x_0 , \dotsc, x_m) = \frac{1}{m+1} \sum_{k=0}^{m+1} (-1)^k \rho (x_k) \alpha (x_0, \dotsc, \widehat{x_k} , \dotsc , x_m )\\[1ex]
& + \frac{1}{m+1} \sum_{k < l} (-1)^{k+l+1} \alpha ([x_k , x_l], x_0 , \dotsc , \widehat{x_k} , \dotsc , \widehat{x_l} , \dotsc , x_m ).
\end{align*}
Define $\calgd(L)$ to be the cochain complex 
\[
\cinf(X) = \Gamma (X, \Lambda^0 L^\vee) \xto{d_L} \Gamma (X, L^\vee)\xto{d_L} \cdots \xto{d_L} \Gamma (X,\Lambda^{n-1} L^\vee) \xto{d_L} \Gamma (X,\Lambda^n L^\vee)
\]
where $n = \dim X$. We will let $H^\ast_L(X)$ denote the cohomology of $\calgd(L)$.  Notice that the anchor map $\rho \co L \to~T_X$ induces a cochain map $\rho^\vee \co \Omega^\ast_X \to \calgd(L)$ and consequently an algebra map $H^\ast(\rho^\vee) \co H^\ast_{dR} (X)~\to~H^\ast_L(X)$. Just as we may view $\Omega^*_X$ as a sheaf of commutative dg algebras on $X$, we will also use $\calgd(L)$ to denote the sheaf of commutative dg algebras on $X$ given by taking smooth sections on varying open subsets of $X$.

The usual Chern-Weil theory extends to the setting of $L$-connections and the resulting characteristic classes are called primary classes of the Lie algebroid $L$.  These primary classes are always cohomologous to the image under the map $\rho^\ast \co \Omega^\ast_X \to \Omega_L$ of classes that arise by equipping $L$ with a standard connection and applying the standard Chern-Weil construction.  However, there are secondary characteristic classes which are richer invariants of the Lie algebroid structure; see Fernandes \cite{Fernandes} for further discussion.

\section{Parametrized $\L8$ Spaces}\label{sect:param}

In this section we will define {\it paramatrized $\L8$ spaces}. These spaces describe families of formal moduli problems of various flavors. The type of family will be dictated by the choice of a paramatrizing {\it dg nil-thickened manifold}.

\begin{definition} An {\it dg nil-thickened manifold}  is a triple $(M, \cA_M , \cI_M)$ where
\begin{enumerate}
\item $M$ is a smooth manifold;
\item $\cA_M$ is a sheaf of unital commutative differential graded algebras over $\Omega^\ast_M$ such that there exists a vector bundle $A \to M$ so that $\cA_M$ is the sheaf of sections of $A$;
\item There is a map $q: \cA_M \to C^\infty_M$ of sheaves of $\Omega^\ast_M$ algebras whose kernel is $\cI_M$. Further, $\cI_M$ satisfies the following:
\begin{itemize}
\item[(a)] There exists a vector bundle $I$ such that $\cI_M$ is the sheaf of sections of $I$.
\item[(b)] There exists a non-negative integer $n$ such that there is a chain of vector bundles
\[
0=I^{n+1} \subset I^n \subset I^{n-1} \subset \dotsb \subset I,
\]
so that the induced filtration at the level of sections is compatible with the algebra structure, i.e.,
\[
\cI_M^k \cdot \cI_M^l \subset \cI_M^{k+l}.
\]
\end{itemize}
\end{enumerate}
\end{definition}

Note that the existence of the vector bundle $A$ is actually redudant since the kernel of the map $q: \cA_M \to C^\infty_M$ is required to be a vector bundle.  Further, the conditions in (3) mean that $\cI_M$ is a nilpotent dg ideal.

\begin{example}
There are many geometric examples of dg nil-thickened manifold.
\begin{enumerate}
\item For any smooth manifold, the triple $(M, \cinf_M, 0)$ defines a dg nil-thickened manifold.
\item For any smooth manifold, the triple $\left (M, \Omega^\ast_M, \Omega^{\ge 1}_M \right )$ defines a dg nil-thickened manifold.
\item Let $Y$ be a complex manifold, then we can view the Dolbeault complex $\Omega^{0,\ast}_Y$ as the quotient of $\Omega^\ast_Y$ by the ideal generated by $\Omega^{1,\ast}_Y$.  Hence, the triple $\left (Y, \Omega^{0,\ast}_Y , \Omega^{0, \ge 1}_Y \right )$ is a dg nil-thickened manifold.
\item Let $M$ be a smooth manifold and $\rho : L \to T_M$ a Lie algebroid.  The triple $\left (M, C^\ast L , C^{\ge 1} L \right )$ defines a dg nil-thickened manifold.  In particular, to any foliation $\cF \subset T_M$ there is an associated dg nil-thickened manifold. \end{enumerate}
\end{example}

We can now define $\L8$ spaces parametrized by a dg nil-thickened manifold.

\begin{definition}
Let $(X, \cA_X, \cI_X)$ be a dg nil-thickened manifold. 
\begin{enumerate}
\item A {\em curved $\L8$ algebra over $\cA_X$} consists of a $\ZZ$-graded topological vector bundle $\pi: V \to X$ and the structure of a curved $\L8$ algebra structure on its sheaf of smooth sections, denoted $\fg$, where the base algebra is  $\cA_X$ with nilpotent ideal $\cI_X$.
\item An {\em $\L8$ space parametrized by} $(X, \cA_X, \cI_X)$ is a pair $(X, \fg^{\cA})$, where $\fg^\cA$ is a curved $\L8$ algebra over $\cA_X$.
\end{enumerate}
\end{definition}

\section{The associated derived stack}\label{sect:stack}

In \cite{GGLoop} we showed that $\L8$ spaces parametrized by $\left (X, \Omega^\ast_X , \Omega^{\ge 1}_X \right )$ present derived stacks, the same is true of $\L8$ spaces parametrized by a dg nil-thickened manifold $(X, \cA_X, \cI_X)$.

There is a site $\dgMan$ of {\it nil dg manifolds}, in which an object $\cM$ is a smooth manifold $M$ equipped with a sheaf $\sO_\cM$ of commutative dg algebras over $\Omega^*_M$ that has a nil dg ideal $\sI_\cM$ such that $\sO_\cM/\sI_\cM \cong \cinf_M$. For the full definition, including the definition of cover, see \cite{GGLoop}. 

\begin{remark}
In \cite{GGLoop}, we required the objects of $\dgMan$ to satisfy a local Poincar\'{e} lemma, i.e., for sufficiently small $U \subset M$, the cohomology of $\sO_\cM (U)$ is concentrated in non-positive degrees.  This requirement was motivated by classical deformation theory/derived geometry, but is actually unneccessary and can be removed without bother.  
\end{remark}

\begin{definition}\label{def:derivedstack}
A {\it derived stack} is a functor $\bX: \dgMan^{op} \to s\!\Sets$ satisfying
\begin{enumerate}
\item[(1)] $\bX$ sends weak equivalences of nil dg manifolds to weak equivalences of simplicial sets;
\item[(2)] $\bX$ satisfies \v{C}ech descent, i.e., if for every nil dg manifold $\cM$ and every cover $\fV$ of $\cM$, we have a weak equivalence of simplicial sets
\[
\cF(\cM) \xrightarrow{\simeq} \holim_{\check{C}\fV}\,\cF,
\]
where $\check{C}\fV_\bullet$ denotes the \v{C}ech nerve of the cover (namely the simplicial diagram with $n$-simplices $\check{C}\fV_n := \fV \times_\cM \cdots \times_\cM \fV$).
\end{enumerate} 
\end{definition}

We now explain how every $\L8$ space parametrized by $\left (X, \Omega^\ast_X , \Omega^{\ge 1}_X \right )$ defines a derived stack.  Let $(X,\fg)$ be such an $\L8$ space. Observe that given a smooth map $f: Y \to X$, we obtain a curved $\L8$ algebra over $\Omega^*_Y$ by
\[
f^* \fg := f^{-1}\fg \otimes_{f^{-1} \Omega^*_X} \Omega^*_Y,
\]
where $f^{-1} \fg$ denotes the sheaf of smooth sections of the pullback vector bundle $f^{-1} V$.

\begin{definition}
For $(X,\fg)$ an $\L8$ space parametrized by $\left (X,  \Omega^\ast_X, \Omega^{\ge 1}_X \right )$, its {\it functor of points} $\bB\fg: \dgMan^{op} \to s\!\Sets$
sends the nil dg manifold $\cM$ to the simplicial set $\bB\fg(\cM)$ in which an $n$-simplex is a pair $(f,\alpha)$: a smooth map $f: M \to X$ and a solution $\alpha$ to the Maurer-Cartan equation in sections over $M$ of the $\L8$ algebra $f^* \fg \ot_{\Omega^*_M} \sI_\cM \ot_\RR \Omega^*(\triangle^n)$.
\end{definition}

The notation is such as to hint at the connection to classifying spaces.  See \cite{GGLoop} or \cite{Getzler} for further discussion of the functor $\bB \fg$. A central result of \cite{GGLoop} is then the following.

\begin{theorem}[Theorem 4.8 \cite{GGLoop}]\label{thm:L8IsDerived}
The functor $\bB \fg$ associated to an $\L8$ space $(X,\fg)$ is a derived stack.
\end{theorem}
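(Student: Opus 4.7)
The plan is to verify the two defining conditions of a derived stack separately. An $n$-simplex of $\bB\fg(\cM)$ has two pieces of data: the classical datum of a smooth map $f: M \to X$, and the derived datum of a Maurer--Cartan element $\alpha$ in the curved $\L8$ algebra $\fg_{f,\cM,n} := \Gamma(M, f^*\fg \ot_{\Omega^*_M} \sI_\cM \ot \Omega^*(\triangle^n))$. Note that since $\sI_\cM$ is nilpotent, this $\L8$ algebra is also nilpotent (the curving lives in $\Omega^{\geq 1}_X$ and hence maps into $\sI_\cM$ under $\Omega^*_M \to \sO_\cM$), so the Maurer--Cartan equation makes sense and the simplicial set of MC elements is precisely the Getzler--Hinich nerve. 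I would therefore reduce everything to two classical facts: smooth maps form a sheaf on $M$, and the nerve functor $\mathrm{MC}_\bullet$ on nilpotent $\L8$ algebras preserves quasi-isomorphisms and homotopy limits (the Goldman--Millson / Hinich principle).

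For condition (1), a weak equivalence $\phi: \cM \to \cN$ is the identity on underlying manifolds $M$, so the set of maps $f: M \to X$ is unchanged. For each such $f$, $\phi$ induces a map of nilpotent $\L8$ algebras $\fg_{f,\cM,n} \to \fg_{f,\cN,n}$. Because $\phi$ is a quasi-isomorphism of $\Omega^*_M$-cdgas compatible with the nil filtration, the induced map on each filtration quotient $\sI_\cM^k/\sI_\cM^{k+1} \to \sI_\cN^k/\sI_\cN^{k+1}$ is a quasi-iso, hence by an inductive spectral sequence argument the map of $\L8$ algebras is a quasi-isomorphism. Invoking Hinich's theorem (or Getzler's Theorem 5.4 in \cite{Getzler}) that $\mathrm{MC}_\bullet$ sends quasi-isomorphisms of nilpotent $\L8$ algebras to weak equivalences of simplicial sets, we conclude $\bB\fg(\cM) \xrightarrow{\simeq} \bB\fg(\cN)$ after passing to the disjoint union over $f$.

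For condition (2), fix a cover $\fV = \{U_i\}$ of $\cM$ with underlying cover $\{U_i \to M\}$. On the classical side, the presheaf $\mathrm{Map}(-,X)$ is a sheaf on $M$, and the constituent limits interact well with the disjoint-union decomposition of $\bB\fg$ indexed by $f$. On the derived side, after fixing $f$, we must show that
\[
\fg_{f,\cM,n} \xrightarrow{\simeq} \holim_{\check{C}\fV}\fg_{f,-,n}
\]
is a quasi-isomorphism of nilpotent $\L8$ algebras, which filters to the analogous statement for each $\sI_\cM^k/\sI_\cM^{k+1}$. This reduces to \v{C}ech descent for the $\Omega^*_M$-module sheaf $f^*V^\sharp \ot \bigl(\sI_\cM^k/\sI_\cM^{k+1}\bigr)$ tensored with $\Omega^*(\triangle^n)$. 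Since $\Omega^*_M$ is fine (partitions of unity) and the sheaves in question are modules over it, the augmented \v{C}ech complex is contractible, so the required quasi-isomorphism holds. Combining with the Hinich principle that $\mathrm{MC}_\bullet$ commutes with homotopy limits of nilpotent $\L8$ algebras then upgrades this to the desired weak equivalence $\bB\fg(\cM) \xrightarrow{\simeq} \holim_{\check{C}\fV} \bB\fg$.

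The main technical obstacle is the commutation of $\mathrm{MC}_\bullet$ with homotopy limits used in (2): $\mathrm{MC}_\bullet$ is not a right adjoint on the nose, and to make the argument precise one either invokes Hinich's model-categorical comparison between dg Lie algebras and formal stacks, or checks directly via the filtration by $\sI_\cM^k$ that a Maurer--Cartan element can be assembled by induction from its restrictions to the cover together with the homotopy-coherent cocycle data, using nilpotence to guarantee convergence. Everything else is formal, and the argument is a mild generalization of Theorem 4.8 of \cite{GGLoop}; the presence of the arbitrary parametrizing cdga $\cA_X$ plays no role beyond guaranteeing that $f^*\fg$ is still a curved $\L8$ algebra over $\Omega^*_M$ after pullback.
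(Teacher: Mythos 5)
Your proposal is correct and follows essentially the same route as the paper's (cited) argument from Theorem 4.8 of \cite{GGLoop}: reduce modulo $\sI_\cM$ to the discrete sheaf of smooth maps $M \to X$, then run an Artinian induction over the filtration by powers of $\sI_\cM$, using that each layer $f^*\fg \ot_{\Omega^*_M} \sI_\cM^{k}/\sI_\cM^{k+1}$ is sections of a vector bundle (a fine sheaf), so \v{C}ech descent and invariance under weak equivalence hold linearly and propagate up the tower via Getzler's Maurer--Cartan machinery. The ``technical obstacle'' you flag about commuting $\mathrm{MC}_\bullet$ with homotopy limits is resolved exactly by the inductive scheme you sketch, which is the content of the Appendix C argument the paper defers to.
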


This theorem extends to more general $\L8$ spaces, e.g., where we change the parametrizing dg nil-thickened manifold, as we now describe.

 Let $B\fg^\cA = (X,\fg^\cA)$ denote an $\L8$ space parametrized by $(X, \cA_X, \cI_X)$. Given a smooth map $f: Y \to X$, we obtain a curved $\L8$ algebra over $\Omega^*Y$ by
\[
f^* \fg^\cA := f^{-1}\fg^\cA \otimes_{f^{-1} \Omega^*_X} \Omega^*_Y,
\]
where $f^{-1} \fg^\cA$ denotes sheaf of smooth sections of the pullback vector bundle $f^{-1} V$ and where we equip $f^{-1}\fg^\cA$ as a module over $f^{-1} \Omega^\ast_X$ via restriction of scalars.

\begin{definition}
For $B\fg^\cA = (X,\fg^\cA)$ an $\L8$ space parametrized by $(X, \cA_X, \cI_X)$, its {\em functor of points} is the functor
\[
\bB\fg^\cA: \dgMan^{op} \to s\!\Sets
\]
for which an $n$-simplex of $\bB\fg^\cA(\cM)$ is a pair $(f,\alpha)$: a smooth map $f: M \to X$ and a solution $\alpha$ to the Maurer-Cartan equation in $f^* \fg^\cA \ot_{\Omega^*_M} \sI_\cM \ot_\RR \Omega^*(\triangle^n)$.
\end{definition}

\begin{theorem}\label{thm:L8Aisderived}
The functor $\bB\fg^\cA$ associated to an $\L8$ space $B\fg^\cA$ parametrized by a dg nil-thickened manifold $(X, \cA_X, \cI_X)$ defines a derived stack.
\end{theorem}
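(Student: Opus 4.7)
The plan is to adapt the proof of Theorem \ref{thm:L8IsDerived} essentially verbatim. The key observation is that for any smooth map $f \colon M \to X$, the pullback $f^* \fg^\cA$ is a curved $\L8$ algebra over $\Omega^*_M$ of exactly the flavor already handled: since $\cA_X$ is an $\Omega^*_X$-algebra, $f^{-1}\fg^\cA$ inherits an $f^{-1}\Omega^*_X$-module structure by restriction of scalars, and base change to $\Omega^*_M$ transports the curved $\L8$ structure across. The Maurer--Cartan equation is solved inside the nilpotent dg $\L8$ algebra $f^*\fg^\cA \otimes_{\Omega^*_M} \sI_\cM \otimes_\RR \Omega^*(\triangle^n)$, whose nilpotence is ensured entirely by $\sI_\cM$, independently of $\cI_X$.

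For Condition (1), a weak equivalence $\cM \to \cM'$ in $\dgMan$ is (by the definition in \cite{GGLoop}) the identity on the underlying smooth manifold, so for any fixed smooth map $f \colon M \to X$ the pulled-back curved $\L8$ algebra is unchanged. The induced map of simplicial Maurer--Cartan sets therefore arises from a quasi-isomorphism of nilpotent dg $\L8$ algebras, and is a weak equivalence by the standard Getzler--Hinich result invoked in the proof of Theorem \ref{thm:L8IsDerived}.

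For Condition (2), fixing a smooth map $f$ reduces \v{C}ech descent for $\bB\fg^\cA$ to \v{C}ech descent for the sheaf on $M$ of Maurer--Cartan elements of $f^*\fg^\cA \otimes \sI_\cM \otimes \Omega^*(\triangle^\bullet)$. I would prove this by filtering by powers of $\sI_\cM$: since $\sI_\cM^{n+1}=0$, only finitely many layers appear, and the associated graded pieces are sheaves of cochain complexes of fine $\Omega^*_M$-modules, which satisfy \v{C}ech descent in the usual way. The main technical point — and essentially the only real work — is to verify that the simplicial Maurer--Cartan functor assembles correctly up this filtration, so that the relevant tower of simplicial sets has homotopy limits that compute the global \v{C}ech limit. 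This is exactly the argument deployed for Theorem \ref{thm:L8IsDerived}, and it carries over without essential modification once $f^*\fg^\cA$ is identified as a curved $\L8$ algebra over $\Omega^*_M$; the passage from the parametrizing datum $(X, \Omega^*_X, \Omega^{\ge 1}_X)$ to a general $(X, \cA_X, \cI_X)$ contributes no new homotopical content.
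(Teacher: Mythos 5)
Your proposal is correct and follows essentially the same route as the paper: the paper's proof likewise defers to the Artinian-induction argument of Theorem 4.8 (Appendix C) of \cite{GGLoop}, noting that modulo $\sI_\cM$ one gets the discrete sheaf of smooth maps $M \to X$ and that the graded layers $f^\ast \fg^\cA \otimes_{\Omega^\ast_M} \sI_\cM^{k-1}/\sI_\cM^k$ are sections of vector bundles on $M$, hence satisfy descent — exactly the filtration-by-powers-of-$\sI_\cM$ argument you outline. Your observation that the nilpotence needed for the Maurer--Cartan theory comes entirely from $\sI_\cM$, so the change of parametrizing datum from $(X,\Omega^\ast_X,\Omega^{\ge 1}_X)$ to $(X,\cA_X,\cI_X)$ adds no new homotopical content, is precisely the point of the paper's proof.
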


\begin{proof}
The proof is the same as Theorem 4.8 of \cite{GGLoop}, restated as Theorem\label{thm:L8IsDerived} above, see Appendix C of {\it op. cit}. There are two key properties which continue to hold given a nilpotent dg manifold $\cM =(M, \sO_\cM, \sI_\cM)$ and a fixed map $f: M \to X$. The first is that  $\bB\fg^\cA (M)$ is the discrete simplicial sheaf of smooth maps $M \to X$. The second observation is that the sheaf $f^\ast \fg^\cA \otimes_{\Omega^\ast_M} \sI_\cM^{k-1}/\sI_\cM^k$ is given as sections of a vector bundle on $M$.  With these two properties, the same Artinian induction argument works as in \it{op. cit.}
\end{proof}

\section{Examples}\label{sect:example}

\subsection{Basic examples over $X_{dR}$}

Let $X$ be a smooth manifold and consider the dg nil-thickened manifold $\left (X, \Omega^\ast_X, \Omega^{\ge 1}_X \right )$. We describe several $\L8$ spaces over this parametrizing space.

\begin{enumerate}
\item The 0 vector space has a canonical $\L8$ algebra structure, so $(X, 0)$ defines an $\L8$ space. Notice that $C^\ast (0) \cong \Omega^\ast_X$ as we are doing linear algebra over the cdga $\Omega^\ast_X$.
\item Consider the $\Omega^\sharp_X$ module
\[
\fg_X = \Omega^\sharp_X (T_X[-1]).
\]
Building on ideas of Kapranov \cite{Kap} and Costello \cite{CosWG2}, we showed in \cite{GG1} (see also \cite{GGLoop} for further discussion) that $(X, \fg_X)$ is an $\L8$ space encoding the smooth structure of $X$. In particular, we have that
\[
\cinf_X \hookrightarrow dR(J) \cong C^\ast (\fg_X)
\]
is a quasi-isomorphism of $\Omega_X$ algebras.
\item Similar to previous example (though preceding it historically), if further $X$ is a complex manifold, then we can complexify forms and use the holormphic tangent bundle to define an $\L8$ algebra
\[
\fg_{X_{\overline{\partial}}} = \Omega^\sharp_X \left (T^{1,0}_X [-1]  \right ).
\]
The $\L8$ space $(X, \fg_{X_{\overline{\partial}}})$ is a {\it derived enhancement} of $X$ as a complex manifold (e.g., $C^\ast (\fg_{X_{\overline{\partial}}}) \simeq \sO_X$) and the associated derived stack (as in the preceding section) represents the moduli of holomorphic maps to $X$.
\item Let $\rho \co L \to T_X$ be a Lie algebroid. We can define an $\L8$ space $(X, \fg_L)$ which is a generalization of examples (1)-(3) above.  Indeed, define
\[
\fg_L = \Omega^\sharp_X (T_X[-1] \oplus L).
\]
We prove in \cite{GGAlgd} that $\fg_L$ is a curved $\L8$ algebra over $\Omega^\ast_X$ such that we have a quasi-isomorphism of $\Omega^\ast_X$ algebras
\[
C^\ast (L) \xrightarrow{\sim} C^\ast (\fg_L).
\]
Note that in {\it op. cit.}, the $\L8$ space $(X, \fg_L)$ is denoted $\op{enh} \, L$.

\end{enumerate}

\subsection{Holomorphic families of formal moduli problems}

Now let $X$ be a complex manifold; we will describe several $\L8$ spaces over $\left (X, \Omega^{0, \ast}_X , \Omega^{0, \ge 1}_X \right)$.  As mentioned previously, one can think of these $\L8$ spaces as describing a family of formal moduli problems which depend holomorphically on $X$.

\begin{enumerate}
\setcounter{enumi}{4}
\item Let $(X, \sO_X) \hookrightarrow (Y, \sO_Y)$ be a closed embedding of complex manifolds; let $N$ denote the normal bundle of this embedding.  In \cite{Yu15}, Yu proves that 
\[
\fg_N \overset{def}{=} \Omega^{0,\sharp} (N[-1])
\]
has the structure of a curved $\L8$ algebra over $\left (X, \Omega^{0,\ast}_X , \Omega^{0, \ge 1}_X \right )$. 
\item Consider the diagonal embedding (for $X$ a complex manifold) $\Delta : X \hookrightarrow X \times X$. In this case the construction of Yu, as in the preceding example, recovers the $\L8$ algebra structure on the shifted holomorphic tangent bundle $\Omega^{0,\sharp} (T[-1])$ originally due to Kapranov \cite{Kap}. 
Alternatively, the same $\L8$ space can be realized by starting from the $\L8$ space $(X, \fg_{X_{\overline{\partial}}})$ of example (3) and using the anti-holomorphic projection onto the $(0,\ast)$ component to obtain an $\L8$ space parametrized by $\left (X, \Omega^{0, \ast}_X , \Omega^{0, \ge 1}_X \right )$. 
\end{enumerate}

\subsection{Descending to $X_{dR}$}

As illustrated by example (6) of the preceding section, an $\L8$ space over the (complex) de Rham space naturally leads to a holomorphic $\L8$ space.  Similarly, for a smooth manifold an $\L8$ space over the de Rham space leads to an uncurved $\L8$ space over $(X, \cinf_X , 0)$.  The converse of the statements is in general obstructed and requires equipping the family of moduli problems with a connection which is flat up to homotopy.

We consider a simple example over $\RR^n$ to simplify explicit comparison, though the same behavior is exhibited over an arbitrary manifold. Further, we will compare the Maurer-Cartan sets--the 0-simplices of the associated Maurer-Cartan spaces--of two explicit dglas to illustrate the difference between working over smooth functions versus the de Rham complex.

Let $\fn$ be a differential graded lie algebra over the algebra of smooth functions on $\RR^n$, e.g., $\fn$ could be given as the sections of a bundle of dglas over $\RR^n$. There is an associated dgla over the de Rham complex 
\[
\widetilde{\fn} := \fn \otimes_{\cinf_{\RR^n}} \Omega^\ast_{\RR^n}, 
\]
with 1-ary bracket (differential) given by $\widetilde{\ell}_1 = \ell_1 \otimes \op{Id} + \op{Id} \otimes d_{dR} $
and the 2-ary bracket $\ell_2$ extended linearly over $\Omega^\ast_{\RR^n}$. There is a surjective map of $\L8$ algebras $q \co \widetilde{\fn} \to \fn$ and we have an induced surjection at the level of Maurer-Cartan sets $\op{MC}(q) \co \op{MC} (\widetilde{\fn}) \to \op{MC} (\fn)$ (actually, following \cite{Getzler} the map $q$ induces a fibration of {\it Maurer-Cartan spaces}). The map $\op{MC}(q)$ does not admit a canonical section, indeed if we view an element $\alpha \in \op{MC}(\fn)$ as a smooth family of solutions to the Maurer-Cartan equation, then a choice of lift $\widetilde{\alpha} \in \op{MC} (\widetilde{\fn})$ is  further data such that $\widetilde{\alpha}$ is flat up to homotopy.

Even more explicitly, the degree 1 part of $\widetilde{\fn}$ is given by
\[
\widetilde{\fn}_1 = \dotsb \oplus \fn_{-1} \{ dx_i \wedge dx_j \} \oplus \fn_0 \{ dx_i\} \oplus \fn_1 .
\]
For $A =A_{1-n} + \dotsb + A_{-1} + A_0 + A_1 \in \widetilde{\fn}_1$ to be a Maurer-Cartan element it needs to satisfy a system of equations (sorted by form degree):
\[
\begin{array}{ll}
\ell_1 (A_1) + \frac{1}{2} \ell_2 (A_1,A_1) &=0\\[1ex]
d_{dR} A_1 + \ell_1 (A_0) + \ell_2 (A_1 , A_0) &=0\\[1ex]
d_{dR} A_0 + \ell_1 (A_{-1}) + \ell_2 (A_1 , A_{-1} ) + \frac{1}{2} \ell_2 (A_0 , A_0) &=0\\[1ex]
\hfil \vdots \hfil & \hfil \vdots \hfil \\
d_{dR} A_{2-n} + \ell_1 (A_{1-n}) + \ell_2 (A_1 , A_{1-n} ) + \ell_2 (A_0 , A_{2-n}) +\dotsb &=0.
\end{array}
\]
The first equation is just the Maurer-Cartan equation for $\fn$, we can make sense of the others by define a differential $\partial = \ell_1 + \ell_2 (A_1, -)$, then the second equation becomes
\[
-d_{dR} A_1 = \partial A_0.
\]
That is, $A_1$ is flat up to an exact correction term.  The second equation then says that the correction $A_0$ satisfies the Maurer-Cartan equation up to a higher correction
\[
-d_{dR} A_0 - \frac{1}{2} \ell_2 (A_0 , A_0) = \partial A_{-1},
\]
and so on and so on.

\section{An extended example: resolving a smooth manifold over a Lie algebroid}\label{sect:algd}

Let $\rho \co L \to T_X$ be a Lie algebroid. We construct an $\L8$ space $B \fg_{X_L} = (X , \fg_{X_L})$ over the augmented dg manifold $\left (X, C^\ast L , C^{\ge 1} L \right )$ as follows.

\begin{prop}\label{prop:BgXL}
Let $\rho : L \to T_X$ be a Lie algebroid.  There is a curved $\L8$ algebra $\fg_{X_L}$ over $C^\ast L$, with nilpotent ideal $C^{\ge 1} L$, such that
\begin{enumerate}
\item $\fg_{X_L} \cong C^\sharp L \otimes_{C^\infty_X} L [-1]$ as $C^\sharp L$ modules;
\item $dR_L (J_L) \cong C^\ast (\fg_{X_L})$ as commutative $C^\ast L$ algebras;
\item We have a quasi-isomorphism of $C^\ast L$ algebras
\[
  C^\infty_X \xrightarrow{\simeq} dR_L (J_L).
\]
\end{enumerate}
\end{prop}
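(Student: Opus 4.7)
The plan is to generalize the Kapranov--Costello construction recalled in example~(2) of Section~\ref{sect:example}, replacing the tangent bundle $T_X$ by $L$ and $\Omega^\ast_X$ by $\calgd(L)$. The geometric input is the sheaf $J_L$ of $L$-jets from Appendix~\ref{sect:Ljets}, which carries a canonical flat $L$-connection -- the Grothendieck connection for $L$-jets -- so that $dR_L(J_L)$ is a commutative dg algebra over $\calgd(L)$.

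The first step is to produce a geometric identification of graded $\cinf_X$-modules
\[
J_L \;\cong\; \csym_{\cinf_X}(L^\vee)
\]
which is the identity on constants $\cinf_X = \csym^0(L^\vee)$. I would obtain this by choosing an $L$-connection on $L$ and exponentiating, the Lie-algebroid version of the formal PBW/exponential isomorphism used in \cite{GG1}; concretely one inductively splits the tower of truncated $L$-jet bundles $J^k_L L \twoheadrightarrow J^{k-1}_L L$. Transporting the flat $L$-connection on $J_L$ across this identification yields a $\calgd(L)$-linear square-zero derivation $d_\fg$ on $\calgd(L) \otimes_{\cinf_X} \csym_{\cinf_X}(L^\vee)$. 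This $d_\fg$ is dual to a cocommutative coderivation on $\Sym(\fg_{X_L}[1])$ where $\fg_{X_L} := C^\sharp L \otimes_{\cinf_X} L[-1]$, hence encodes a curved $\L8$ structure on $\fg_{X_L}$; by construction $C^\ast(\fg_{X_L}) \cong dR_L(J_L)$ as $\calgd(L)$-cdgas, giving (1) and (2). The normalization of the exponential forces the curvature (the $\csym^0$ component of $d_\fg$ modulo the internal differential of $\calgd(L)$) to lie in $C^{\ge 1} L$, so the $\L8$ algebra is curved in the nilpotent sense required by our definition.

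For (3), I would use the symmetric-degree filtration on $\csym_{\cinf_X}(L^\vee)$ and the induced filtration on $dR_L(J_L)$. On the associated graded, the differential reduces to $d_L \otimes \id$ acting on $\calgd(L) \otimes_{\cinf_X} \Sym^n(L^\vee)$; a fibrewise Euler-vector-field contraction provides a null-homotopy in each positive symmetric degree, so the only surviving cohomology is $\cinf_X$ in bidegree $(0,0)$. A standard spectral sequence argument then promotes this to the desired quasi-isomorphism $\cinf_X \xrightarrow{\simeq} dR_L(J_L)$.

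The main obstacle is building the exponential isomorphism globally for a general Lie algebroid: the classical geodesic-flow picture used when $L = T_X$ is unavailable, so one must proceed by an inductive splitting along the $L$-jet tower (or by adapting a PBW map for $L$ in the spirit of Laurent-Gengoux--Sti\'enon--Xu) and then verify that the resulting $\L8$ structure is independent of the choice of $L$-connection up to weak equivalence. Once this identification is carefully in hand, transporting the flat connection and extracting the curved $\L8$ structure is a formal dualization, parallel to the $L = T_X$ case treated in \cite{GG1, CosWG2}.
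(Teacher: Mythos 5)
Your overall route is the same as the paper's: identify $J_L$ with $\csym_{\cinf_X}(L^\vee)$ by a choice of splitting, transport the canonical flat $L$-connection (Grothendieck connection) to get a square-zero derivation on $C^\ast L \otimes_{\cinf_X}\csym_{\cinf_X}(L^\vee)$, and dualize to a curved $\L8$ structure on $\fg_{X_L} = C^\sharp L \otimes_{\cinf_X} L[-1]$, with the curvature landing in $C^{\ge 1}L$ because the splitting respects the augmentation to $\cinf_X$. One remark on your ``main obstacle'': the paper does not need a PBW/exponential map built from an $L$-connection. It cites Calaque--Van den Bergh for the jet filtration with $\Gr J_L \cong \csym_{\cinf_X}(L^\vee)$, picks a single $\cinf_X$-linear splitting of the relevant quotient of the filtration, and invokes the universal property of the (completed) symmetric algebra to produce $\csym_{\cinf_X}(L^\vee)\xto{\cong} J_L$; contractibility of the space of splittings gives independence of choices. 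Your inductive splitting of the jet tower or an LSX-style PBW map would also work, but is heavier than necessary.

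The one genuine problem is your sketch of (3). You filter by symmetric degree and claim the associated graded differential is $d_L \otimes \id$ on $C^\ast L \otimes_{\cinf_X}\Sym^n(L^\vee)$, null-homotoped by a fibrewise Euler contraction. That step fails as stated: the Euler contraction is a homotopy for the \emph{Koszul} component of the differential (the term pairing the $L^\vee$-generators of $C^\ast L$ against the symmetric algebra, which raises form degree while lowering symmetric degree), not for $d_L \otimes \id$; and if the associated graded differential really were $d_L \otimes \id$, the first page would be $H^\ast_L$ with coefficients in $\Sym^n(L^\vee)$, which does not vanish in positive degrees for a general Lie algebroid, so the claimed collapse to $\cinf_X$ would be false. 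The fix is standard: choose the filtration so that the leading term of the transported connection is the Koszul differential $C^\ast L \otimes \Sym^k(L^\vee) \to C^{\ast+1}L\otimes \Sym^{k-1}(L^\vee)$ induced by $\id_{L^\vee}$, whose cohomology is $\cinf_X$ by the fibrewise contraction, and then run the spectral sequence (this is exactly the acyclicity statement the paper outsources to Lemma 4.2.4 of \cite{CVDB}). With that correction your argument goes through and is essentially a self-contained version of the paper's citation.
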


\begin{proof}
This proof mimics that of Lemma 4.12 of \cite{GGLoop} in which case $L = T_X$ and $\rho$ is the identity. We recall some relevant facts from Section 4.2.5 of \cite{CVDB}. The $L$-jets are filtered (as in the standard case) and we have a canonical isomorphism
\[
\Gr J_L = \csym_{C^\infty_X} (L^\vee),
\]
so in particular for each $k \ge 0$ we have an isomorphism
\[
F^k J_L / F^{k+1} J_L = \Sym^k (L^\vee).
\]
By picking a $C^\infty_X$-linear splitting of the map $F^1 J_L \to C^\infty_X$, we obtain--via the universal property of symmetric algebras--an isomorphism
\[
\csym_{C^\infty_X} (L^\vee) \xrightarrow{\cong} J_L .
\]
This isomorphism depends on the choice of splitting, however, the space of splittings is contractible and the resulting $\L8$ structure will be unique up to isomorphism.

Further recall from \cite{CVDB}, that $J_L$ has a flat $L$-connection $\nabla$. Indeed, for $\phi \in J_L$, $D \in UL$, and $\chi \in L$ the connection is given by
\[
\nabla_\chi (\phi ) (D) = \chi \cdot \phi (D) - \phi (\chi D).
\]
Hence, via the isomorphism above we obtain a commutative $C^\ast L$ algebra
\[
C^\ast (\fg_{X_L}) \overset{def}{=} C^\ast L \otimes_{C^\infty_X} \csym_{C^\infty_X} (L^\vee),
\]
which equips $C^\sharp L \otimes_{C^\infty_X} L[-1]$ with the structure of an $\L8$ algebra over $C^\ast L$.

Property (2) is obvious from the construction and (3) is exactly Lemma 4.2.4 of \cite{CVDB}.

\end{proof}

\subsection{Characteristic Classes for $B \fg_{X_L}$}


In this section we define natural characteristic classes for our $\L8$ space $B \fg_{X_L}$. We use a Chern-Weil style approach with Atiyah classes to recover the primary classes of the Lie algebroid $L$.

Let $E$ be a vector bundle over the smooth manifold $X$. Our goal is to find the obstruction to $E$ defining a representation of $L$.  Via Proposition \ref{prop:connfromjets} below, we can rephrase this problem in terms of Atiyah classes.  Let us outline our construction (which follows a similar vein as in Section 11 of \cite{GG1}):
\begin{enumerate}
\item In the previous section, we have resolved $C^\infty_X$ as a $C^\ast L$ algebra using $L$-jets and the $L$-de Rham complex
\[
dR_L (J_L) \simeq C^\infty_X.
\]
\item Similarly, after splitting the canonical quotient map $J_L (\sE) \to \sE$, we can resolve $\sE$ as a $C^\ast L$ algebra
\[ dR_L (J_L (\sE)) \simeq \sE , \]
where the module structure on the right hand side is given by the map $C^\ast L \to \cinf_X$.
\item The dg module $dR_L (J_L (\sE))$ over the dg algebra $dR_L (J_L)$ admits a flat connection.
\item The Atiyah class of this connection measures the failure of the connection to be compatible with the $C^\ast L$ algebra structures.
\item The Chern classes built from the Atiyah class recover the Chern classes of $E$.
\end{enumerate}

To justify step (2) in the outline, we have the following $L$-jet analog of Lemma E.2 of \cite{GGLoop}.

\begin{lemma}\label{lem:jetiso}
Let $\sE$ be a locally free $C^\infty_X$ module and $\nu \co \sE \to J_L (\sE)$ a splitting of the quotient map $p \co J_L (\sE) \to \sE$, then we have a quasi-isomorphism of $dR_L$ algebras
\[
dR_L (J_L (\sE)) \simeq \sE.
\]
Further, we have an isomorphism induced by $\nu$
\[
i_\nu \co J_L \otimes_{\cinf_X} \sE \cong J_L (\sE) .
\]
\end{lemma}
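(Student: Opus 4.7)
The plan is to first establish the isomorphism $i_\nu$ of the second assertion and then bootstrap it, via a filtered spectral-sequence argument, to the scalar Poincaré-type statement $\cinf_X \simeq dR_L(J_L)$ of Lemma 4.2.4 in \cite{CVDB}.

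For $i_\nu$: extend $\nu$ by $J_L$-linearity to
\[
i_\nu \co J_L \otimes_{\cinf_X} \sE \to J_L(\sE), \qquad \phi \otimes e \longmapsto \phi \cdot \nu(e),
\]
using the natural $J_L$-module structure on $J_L(\sE)$. Both sides carry the jet filtration and $i_\nu$ preserves it. Section 4.2.5 of \cite{CVDB} supplies the identifications $\Gr J_L \cong \csym_{\cinf_X}(L^\vee)$ and $\Gr J_L(\sE) \cong \csym_{\cinf_X}(L^\vee) \otimes_{\cinf_X} \sE$. Under these, $\Gr i_\nu$ is the identity in symbol degree zero (because $\nu$ splits $p$) and hence, by $J_L$-linearity, the identity in every symbol degree. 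Since both filtrations are complete and Hausdorff, $\Gr i_\nu = \id$ lifts to the desired isomorphism.

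For the quasi-isomorphism: filter $dR_L(J_L(\sE))$ by the jet filtration $F^\bullet J_L(\sE)$. The flat $L$-connection $\nabla^\sE$ on $J_L(\sE)$, defined by the same formula $\nabla^\sE_\chi \phi(D) = \chi \cdot \phi(D) - \phi(\chi D)$ as the connection on $J_L$, lowers jet order by one, so on the associated graded it acts as the Koszul differential on $\csym(L^\vee) \otimes \sE$ induced by the contraction $L \otimes L^\vee \to \cinf_X$. This Koszul complex has cohomology $\sE$ concentrated in de Rham degree zero, so the spectral sequence collapses at $E_1$, converges by completeness of the filtration, and the augmentation $p$ paired with $C^\ast L \to \cinf_X$ produces the claimed quasi-isomorphism $dR_L(J_L(\sE)) \simeq \sE$.

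The main obstacle will be verifying that the higher-order components of $\nabla^\sE$ really do strictly increase filtration degree, so that only the principal symbol contributes to the $E_1$ page; this is essentially built into the construction in \cite{CVDB} but requires care because $\nabla^\sE$ depends on an auxiliary choice of $L$-action on $\sE$. Because the space of such auxiliary data (and of splittings $\nu$) is contractible, the resulting quasi-isomorphism is canonical up to homotopy, in accord with the corresponding remark already made inside the proof of Proposition \ref{prop:BgXL}.
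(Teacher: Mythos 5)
The paper itself gives no argument for this lemma (it is stated as the $L$-jet analogue of Lemma E.2 of \cite{GGLoop} and, via Lemma 4.2.4 and Section 4.2.5 of \cite{CVDB}, is taken from the references), so your proposal is judged on its own terms. Your first half is fine: extending $\nu$ to $i_\nu(\phi\otimes e)=\phi\cdot\nu(e)$, checking it is filtration-preserving, computing that $\Gr i_\nu$ is the canonical isomorphism $\csym(L^\vee)\otimes_{\cinf_X}\sE \cong \Gr J_L(\sE)$ (here you need $\sE$ of finite rank so that $J_L\otimes_{\cinf_X}\sE$ is complete for the jet filtration), and concluding by completeness is exactly the standard argument behind the cited lemmas.

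The second half has a genuine gap: the differential of $dR_L(J_L(\sE))$ is not the formula you wrote. The expression $\nabla^{\sE}_\chi\phi(D)=\chi\cdot\phi(D)-\phi(\chi D)$ only makes sense if $\sE$ carries an action of $L$, i.e.\ a \emph{flat} $L$-connection, and such a structure does not exist for a general vector bundle; indeed, if it did, the whole point of the Atiyah-class construction in Section 6 (measuring the obstruction to $E$ being an $L$-representation) would evaporate. Your hedge that the construction ``depends on an auxiliary choice of $L$-action on $\sE$'' whose space is contractible is not a repair: a non-flat $L$-connection $\nabla^0$ on $\sE$ always exists, but plugging it into your formula produces a connection on $J_L(\sE)$ whose curvature is exactly $F^{\nabla^0}$, so $dR_L(J_L(\sE))$ would not even be a complex, and flat $L$-connections need not exist at all (and when they do, their space is certainly not contractible). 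The canonical flat connection on $J_L(\sE)={\mathcal Hom}_{\cinf_X}(\cU_L,\sE)$ requires no structure on $\sE$: it is the Grothendieck-type connection given by right multiplication on the argument, $(\nabla_\chi\phi)(D)=\phi(D\chi)$ (with care about which of the two $\cinf_X$-module structures on $J_L(\sE)$ one uses for linearity and which for the Leibniz rule, and correspondingly which one forms the tensor with $C^\sharp L$); equivalently one can transport $\nabla\otimes 1$ from $J_L\otimes_{\cinf_X}\sE$ through $i_\nu$, using the module structure over which $\nabla$ is linear. Once the differential is set up this way, your filtration argument does go through: the connection drops jet order by one, its symbol is the Koszul differential on $C^\sharp L\otimes\csym(L^\vee)\otimes\sE$, the $E_1$ page is $\sE$ concentrated in degree zero, and completeness of the jet filtration gives convergence, yielding $dR_L(J_L(\sE))\simeq\sE$ with $\sE$ a $C^\ast L$-module via $C^\ast L\to\cinf_X$.
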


Recall from Proposition \ref{prop:Ljets} that $J_L(\sE)$ is a module over $J_L$. Therefore, $dR_L (J_L (\sE))$ is a dg module over $dR_L (J_L)$.  It turns out that this module has a natural connection.

\begin{prop}\label{prop:dgLconn}
As a differential graded module over $dR_L(J_L)$, $dR_L(J_L(\sE))$ is equipped with a flat connection, i.e., there exists a map
\[
\widetilde{\nabla}_\nu \co dR_L(J_L(\sE)) \to dR_L(J_L(\sE)) \otimes_{dR_L(J_L)} \Omega^1_{dR_L(J_L)}
\]
such that
\[
\widetilde{\nabla}_\nu (e \cdot j) = e \otimes d_{dR} (j) + (-1)^{\lvert j \rvert} \widetilde{\nabla}_\nu (e) \cdot j
\]
and $\widetilde{\nabla}_\nu^2 =0$.
\end{prop}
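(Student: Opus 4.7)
The plan is to use Lemma \ref{lem:jetiso} to trivialize the problem: applying $dR_L(-) = C^\ast L \otimes_{\cinf_X} (-)$ to the isomorphism $i_\nu\co J_L \otimes_{\cinf_X} \sE \xrightarrow{\cong} J_L(\sE)$ yields an isomorphism of dg modules over the cdga $dR_L(J_L)$,
\[
dR_L(J_L(\sE)) \;\cong\; dR_L(J_L) \otimes_{\cinf_X} \sE,
\]
under which the target of the desired connection identifies with $\Omega^1_{dR_L(J_L)} \otimes_{\cinf_X} \sE$.

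On this tensor product I would define $\widetilde{\nabla}_\nu$ as the tautological connection that declares each $1 \otimes e$ (for $e \in \sE$) horizontal; that is, for $a \in dR_L(J_L)$ and $e \in \sE$, set
\[
\widetilde{\nabla}_\nu(a \otimes e) \;:=\; d_{dR}(a) \otimes e,
\]
where $d_{dR}\co dR_L(J_L) \to \Omega^1_{dR_L(J_L)}$ is the universal K\"ahler derivation of the commutative dg algebra $dR_L(J_L)$. The graded Leibniz rule stated in the proposition then follows from the Leibniz rule for $d_{dR}$, up to a standard Koszul-sign bookkeeping that produces the $(-1)^{|j|}$. For flatness, I would extend $\widetilde{\nabla}_\nu$ to the natural degree-one derivation on $dR_L(J_L(\sE)) \otimes_{dR_L(J_L)} \Omega^\ast_{dR_L(J_L)}$; under the isomorphism above it acts as $d_{dR} \otimes \id_\sE$, and hence $\widetilde{\nabla}_\nu^2 = d_{dR}^2 \otimes \id_\sE = 0$. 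Equivalently, $\widetilde{\nabla}_\nu^2$ is $dR_L(J_L)$-linear and annihilates the generating sections $1 \otimes e$, so it vanishes identically.

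The main bookkeeping obstacle is keeping straight two distinct differentials on $dR_L(J_L)$: the internal $L$-de Rham differential coming from the flat $L$-connection on $J_L$ (recalled in Proposition \ref{prop:BgXL}), versus the universal K\"ahler differential $d_{dR}$ used to define $\widetilde{\nabla}_\nu$. Compatibility of $\widetilde{\nabla}_\nu$ with the internal differential is the key check and follows from the standard fact that $d_{dR}$ is a chain map together with the observation that the internal differential acts only on the $dR_L(J_L)$ factor of the tensor product. Finally, although the construction depends on the choice of splitting $\nu$, the contractibility of the space of splittings already noted in the proof of Proposition \ref{prop:BgXL} makes the resulting structure canonical up to contractible choice.
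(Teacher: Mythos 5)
Your construction is the same as the paper's: apply $dR_L$ to the isomorphism $i_\nu$ of Lemma \ref{lem:jetiso} to identify $dR_L(J_L(\sE))$ with $\sE \otimes_{\cinf_X} dR_L(J_L)$ (and its $1$-forms with $\sE \otimes_{\cinf_X} \Omega^1_{dR_L(J_L)}$), and take the tautological connection $1 \otimes d_{dR}$ that makes sections of $\sE$ horizontal; the Leibniz rule and $\widetilde{\nabla}_\nu^2 = 0$ then follow exactly as you say, which is all the proposition asserts.

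One correction to your final paragraph, though: compatibility of $\widetilde{\nabla}_\nu$ with the internal differential of $dR_L(J_L(\sE))$ is \emph{not} part of the statement, is not needed, and in general is \emph{false}. Under $i_\nu$ the internal differential does not act only on the $dR_L(J_L)$ factor --- it is built from the flat $L$-connection on $J_L(\sE)$ transported through the chosen splitting $\nu$, and this transport mixes the $\sE$ factor. The failure of $\widetilde{\nabla}_\nu$ to commute with the internal differential is precisely the Atiyah class $\At(\widetilde{\nabla}_\nu)$ introduced immediately after the proposition; if your claimed compatibility held, that class and the resulting Chern classes of Corollary \ref{cor:primaryinvariants} would be identically zero. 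So drop that ``key check'': the proposition only requires the Leibniz rule with respect to $d_{dR}$ and flatness, both of which your argument establishes.
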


\begin{proof}
The isomorphism $i_\nu$ of Lemma \ref{lem:jetiso} induces an isomorphism
\[
dR_L (J_L (\sE)) = dR_L \otimes_{\cinf_X} J_L (\sE) \cong_{i_\nu} dR_L \otimes_{\cinf_X} J_L \otimes_{\cinf_X} \sE = dR_L (J_L) \otimes_{\cinf_X} \sE \cong \sE \otimes_{\cinf_X} dR_L (J_L).
\]
Consequently, we have an isomorphism
\[
dR_L(J_L (\sE)) \otimes_{dR_L (J_L)} \Omega^1_{dR_L (J_L)} \cong \sE \otimes_{\cinf_X} \Omega^1_{dR_L (J_L)}.
\]
It is then easy to verify that the map
\[
\widetilde{\nabla}_\nu \co \sE \otimes_{\cinf_X} dR_L (J_L) \xrightarrow{1 \otimes d_{dR}} \sE \otimes_{\cinf_X} \Omega^1_{dR_L (J_L)}
\]
defines a flat connection.
\end{proof}

Now, there is no reason for $\widetilde{\nabla}_\nu$ to be compatible with the internal differential of $dR_L (J_L (\sE))$. Equivalently, $\widetilde{\nabla}_\nu$ may fail to be a map of $dR_L$-modules. Such failure is measured by the Atiyah class
\[
\At (\widetilde{\nabla}_\nu ) \in \Omega^1_{dR_L(J_L)} \otimes_{dR_L (J_L)} \End_{dR_L (J_L)} (dR_L (J_L (\sE))) .
\]
Since our connection $\widetilde{\nabla}_\nu$ is flat, the associated Chern classes are closed (Proposition \ref{prop:atiyahproperties}), i.e., 
\[
ch_k (\widetilde{\nabla}_\nu) \in \Omega^k_{dR_L (J_L),cl} .
\]

\begin{prop}\label{prop:dRL}
We have a quasi-isomorphism of $C^\ast L$-algebras
\[
\Omega^k_{dR_L ( J_L),cl} \simeq C^{\ge k} L.
\]
\end{prop}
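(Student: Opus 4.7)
The plan is to exploit the splitting of Proposition \ref{prop:BgXL}, which identifies $A := dR_L(J_L)$ with $C^\ast L \otimes_{C^\infty_X} \csym_{C^\infty_X}(L^\vee)$ and consequently presents the relative K\"ahler differentials over $C^\ast L$ as
\[
\Omega^q_{A/C^\ast L} \cong A \otimes_{C^\infty_X} \wedge^q L^\vee.
\]
Together with the internal differential $d_A$ (incorporating both the Chevalley--Eilenberg differential $d_L$ on $C^\ast L$ and the Grothendieck connection on $J_L$) and the relative de Rham differential $d_{dR}$, this forms a bicomplex whose Hodge-type truncation
\[
F^k \mathrm{Tot} := \bigoplus_{q \ge k} \Omega^q_{A/C^\ast L}, \qquad \partial = d_A + d_{dR},
\]
will serve as a go-between.

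My first step is to show that $F^k \mathrm{Tot} \simeq \Omega^k_{A,cl}$ as complexes of $C^\ast L$-modules. The formal Poincar\'e lemma, applied fibrewise over $C^\ast L$ to the completed polynomial algebra $\csym_{C^\infty_X}(L^\vee)$, establishes that $(\Omega^{\ge k}_{A/C^\ast L}, d_{dR})$ is a resolution of $\Omega^k_{A,cl}$ placed in form degree $k$. A spectral sequence argument filtering $F^k \mathrm{Tot}$ by form degree and collapsing on $d_{dR}$-cohomology then promotes this to a quasi-isomorphism of the total complexes.

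My second step is to show that $F^k \mathrm{Tot} \simeq C^{\ge k} L$. Using the quasi-isomorphism $A \simeq C^\infty_X$ of Proposition \ref{prop:BgXL}(3) together with flatness of $\wedge^q L^\vee$ over $C^\infty_X$, one computes $H^\ast(\Omega^q_{A/C^\ast L}, d_A) \cong \wedge^q L^\vee = C^q L$, concentrated in internal degree $0$. In the spectral sequence filtered by internal $d_A$-degree the $E_1$ page collapses to $C^{\ge k} L$, and the induced $d_1$ differential must then be identified with the Chevalley--Eilenberg differential $d_L$; convergence then delivers the desired quasi-isomorphism, and composing the two steps yields $\Omega^k_{A,cl} \simeq C^{\ge k} L$.

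The principal obstacle is precisely the identification of the $d_1$ differential in the second spectral sequence with $d_L$. Cocycle representatives for classes in $H^\ast(\Omega^q_{A/C^\ast L}, d_A)$ cannot simply be taken to be $1 \otimes c$ for $c \in \wedge^q L^\vee$, because of the mixing between $d_L$ and the Grothendieck connection; honest representatives must be constructed with care so that $d_{dR}$ applied to them manifestly recovers $d_L$ modulo $d_A$-exact terms. This matching is the conceptual heart of the proposition and is what links closed forms on the jet-resolution $A$ to the truncation filtration $C^{\ge \ast} L \subset C^\ast L$, thereby ensuring that the Chern classes of $\widetilde{\nabla}_\nu$ represent genuine classes in $H^\ast_L(X)$.
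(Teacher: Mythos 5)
Your outline shares its skeleton with the paper's own argument: both rest on (i) the jet-resolution statement $dR_L(J_L(\sE)) \simeq \sE$ of Lemma \ref{lem:jetiso}, applied here with $\sE = \Lambda^q L^\vee$, and (ii) a degeneration of the (truncated forms)/(closed forms) double complex, which is exactly the content of your first step via the formal Poincar\'e lemma. The genuine gap is at the step you yourself single out as the conceptual heart and then leave undone: you never identify the $d_1$-differential of your second spectral sequence with $d_L$, nor do you exhibit a chain-level comparison map that would render that identification automatic. In the paper this is precisely what the citation of Lemma 4.3.6 of \cite{CVDB} supplies: a dg-algebra isomorphism $dR_L(J_L(C^\ast L)) \cong \Omega^\ast_{dR_L(J_L)}$, which packages once and for all the interaction among $d_{dR}$, the Grothendieck connection, and $d_L$; combined with the natural quasi-isomorphism $C^\ast L \xrightarrow{\simeq} dR_L(J_L(C^\ast L))$ coming from Lemma \ref{lem:jetiso}, one obtains a multiplicative, $C^\ast L$-linear, filtration-compatible comparison, and the truncation statement then follows by your step-1 degeneration. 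Without such a map, a spectral-sequence comparison yields at best an abstract isomorphism of cohomology groups; it does not by itself produce the asserted quasi-isomorphism of $C^\ast L$-algebras, and constructing the cocycle representatives needed to see $d_1 = d_L$ is exactly the missing content, not a routine verification.

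A related, smaller imprecision: your computation of the column cohomology via ``$A \simeq C^\infty_X$ plus flatness of $\Lambda^q L^\vee$'' is not a legitimate K\"unneth argument, because under the splitting the internal differential on $\Omega^q_{A/C^\ast L} \cong dR_L(J_L(\Lambda^q L^\vee))$ is the Grothendieck connection on $J_L(\Lambda^q L^\vee)$, which is not of the form $d_A \otimes 1$. The conclusion $H^\ast\bigl(\Omega^q_{A/C^\ast L}, d_A\bigr) \cong C^q L$ is nevertheless correct, but the right justification is Lemma \ref{lem:jetiso} (i.e.\ Lemma 4.2.4 of \cite{CVDB}) applied to $\sE = \Lambda^q L^\vee$, which is also the route the paper takes.
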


\begin{proof}
By Lemma 4.3.6 of \cite{CVDB}, we have an isomorphism of dg-algebras
\[
dR_L (J_L (C^\ast L)) \xrightarrow{\cong} \Omega^\ast_{dR_L (J_L)}.
\]
Now by Lemma \ref{lem:jetiso} (which follows from Lemma 4.2.4 of \cite{CVDB}), we have a quasi-isomorphism $C^\ast L$-algebras
\[
C^\ast L \xrightarrow{\simeq} dR_L (J_L (C^\ast L)).
\]
The lemma now follows from the definition of closed forms, as the relevant double complex degenerates into the truncated complex of cochains $C^{\ge k} L$.
\end{proof}

\begin{prop}\label{prop:connfromjets}
The connection $\widetilde{\nabla}_\nu$ induces an $L$-connection $\nabla_0$ on $E$. Further, the curvature of $\nabla_0$ corresponds to the constant term of the Atiyah class $\At (\widetilde{\nabla}_{\nu})$.
\end{prop}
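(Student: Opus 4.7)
The plan is to extract $\nabla_0$ directly from the canonical flat $L$-connection $\nabla^{can}$ on $J_L(\sE)$, and then identify the curvature with the lowest-order piece of the Atiyah class after transporting everything through the splitting $i_\nu$ of Lemma \ref{lem:jetiso}.

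First I would define the candidate $L$-connection as the composite
\[
\nabla_0 \co \sE \xto{\nu} J_L(\sE) \xto{\nabla^{can}} L^\vee \otimes_{\cinf_X} J_L(\sE) \xto{\op{id} \otimes p} L^\vee \otimes_{\cinf_X} \sE,
\]
with $\nabla^{can}$ the canonical flat $L$-connection on $J_L(\sE)$ from Section 4.2 of \cite{CVDB}. The Leibniz rule $\nabla_0(fe) = d_L f \otimes e + f \cdot \nabla_0(e)$ for $f \in \cinf_X$ and $e \in \sE$ follows immediately from the Leibniz rule for $\nabla^{can}$ together with the $\cinf_X$-linearity of both $\nu$ and $p$, so $\nabla_0$ really is an $L$-connection on $E$.

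Next I would pull $\nabla^{can}$ back along $i_\nu \co J_L \otimes_{\cinf_X} \sE \xto{\cong} J_L(\sE)$. Since $i_\nu(j \otimes e) = j \cdot \nu(e)$, the Leibniz rule for $\nabla^{can}$ with respect to the $J_L$-module structure on $J_L(\sE)$ yields a decomposition of the form
\[
(\op{id} \otimes i_\nu^{-1}) \circ \nabla^{can} \circ i_\nu \;=\; \nabla^{can}_{J_L} \otimes \op{id}_\sE \;+\; \op{id}_{J_L} \otimes \nabla_0 \;+\; \beta,
\]
where $\beta$ takes values in $L^\vee \otimes F^{\ge 1} J_L \otimes \End(\sE)$ and thus vanishes modulo the positive jets $F^{\ge 1} J_L$. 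In this sense $\nabla_0$ is by construction the constant-jet part of $\nabla^{can}$ after the splitting.

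Finally I would expand the Atiyah class $\At(\widetilde{\nabla}_\nu) = [d_{\mathrm{int}}, \widetilde{\nabla}_\nu]$ using $\widetilde{\nabla}_\nu = \op{id}_\sE \otimes d_{dR}$ from Proposition \ref{prop:dgLconn}, noting that $d_{\mathrm{int}}$ on $\sE \otimes_{\cinf_X} dR_L(J_L)$ is built from $d_L$ and the transported connection above. Under the identification $\Omega^1_{dR_L(J_L)} \cong dR_L(J_L(C^\ast L))$ of Proposition \ref{prop:dRL}, the component of $\At(\widetilde{\nabla}_\nu)$ lying in $\Lambda^2 L^\vee$ of the outer $L$-de Rham factor with all jets of $C^\ast L$ set to zero collapses, via flatness of $\nabla^{can}$ together with the Bianchi-type identity for $\nabla^{can}_{J_L}$, to $\nabla_0 \circ \nabla_0 = R_{\nabla_0}$. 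The main obstacle will be the bookkeeping in this last step: verifying that the higher-jet contributions coming from $\beta$ in the graded commutator are absorbed cleanly, and that the ``constant term'' can be characterized intrinsically inside $\Omega^1_{dR_L(J_L)}$ rather than in a way that depends on the splitting $\nu$. This is most transparently handled by working modulo $F^{\ge 1} J_L$ throughout the calculation and matching degrees across the bicomplex.
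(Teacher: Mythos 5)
Your first claim is handled correctly, and by essentially the same mechanism the paper has in mind: the paper sets $\nabla_0 = p \circ \widetilde{\nabla}_\nu \circ j_L$ (citing Proposition 11.5 of \cite{GG1}), while you take $(\id \otimes p)\circ \nabla^{can}\circ \nu$; up to sign conventions both extract the first-order discrepancy between the chosen splitting $\nu$ and the canonical flat structure on $J_L(\sE)$, and your Leibniz check using $p\circ\nu=\id$ and the $\cinf_X$-linearity of $p$ and $\nu$ is complete. Your decomposition of the transported connection as $\nabla^{can}_{J_L}\otimes\id + \id\otimes\nabla_0 + \beta$ with $\beta$ valued in $F^{\ge 1}J_L$ is also correct, and pursuing it is a legitimate, more intrinsic alternative to the local-frame computation the paper delegates to Lemma 11.6 of \cite{GG1}.

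The gap is in the last step, which is the entire content of the second assertion and which you explicitly defer as ``bookkeeping.'' Two concrete problems. First, the component you propose to read off does not exist: $\At(\widetilde{\nabla}_\nu)$ has cohomological degree $1$ and is $dR_L(J_L)$-linear, hence determined by its values on $\sE\otimes 1$, where $\At(\widetilde{\nabla}_\nu)(e\otimes 1)=\widetilde{\nabla}_\nu\bigl(\nabla_0(e)+\beta(e)\bigr)$; every term carries exactly one K\"ahler (jet-form) leg, and since the grading on $\Omega^1_{dR_L(J_L)}$ is carried entirely by the Chevalley--Eilenberg factors (jets and jet-form generators sit in degree zero), a putative component with $\Lambda^2 L^\vee$ in the outer factor and a degree-zero endomorphism would have degree $2$, not $1$. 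So the curvature $\nabla_0^2\in C^2 L\otimes\End(E)$ cannot be a literal component of $\At(\widetilde{\nabla}_\nu)$ with jets set to zero; the ``constant term'' only becomes the curvature after trading the symbol leg $F^1 J_L/F^2 J_L\cong L^\vee$ for a Chevalley--Eilenberg leg, i.e., after the degree-shifting identification implicit in Proposition \ref{prop:dRL}. Second, the identity that effects this trade is the one you never write down: after reducing mod $F^{\ge 1}J_L$, only the symbol of $\beta$ survives in $\At(\widetilde{\nabla}_\nu)$, and it is the expansion of $d_{\op{int}}^2=0$ (flatness of the transported canonical connection) in its constant-jet, $C^2L$-component that expresses $\nabla_0\circ\nabla_0$ in terms of that symbol. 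Invoking ``flatness together with a Bianchi-type identity'' names the right ingredients but does not verify that the $\beta$-contributions cancel as claimed. Until that computation is carried out (or replaced by the local-frame argument of Lemma 11.6 of \cite{GG1}, as the paper does), the second statement of the proposition is asserted rather than proved.
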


\begin{proof} 
That $\widetilde{\nabla}_\nu$ induces an $L$-connection $\nabla_0$ is completely analogous to Proposition 11.5 of \cite{GG1}: $\nabla_0$ is given by $p \circ \widetilde{\nabla}_\nu \circ J_L$. The curvature of $\nabla_0$ is a local computation as in Lemma 11.6 of \cite{GG1}.
\end{proof}

\begin{cor}\label{cor:primaryinvariants}
For each $k$, the Chern class $ch_k (\widetilde{\nabla}_{\nu})$ computes the $kth$ Chern class of the bundle $E$, i.e., under the identification of Proposition \ref{prop:dRL} we have
\[
ch_k (\widetilde{\nabla}_{\nu}) = \rho^\ast ch_k (E) \in H^{2k}_L ,
\]
where $\rho \co L \to T_X$ is the anchor map.
\end{cor}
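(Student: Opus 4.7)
The plan is to reduce the computation of $ch_k(\widetilde{\nabla}_{\nu})$ to a standard $L$-Chern-Weil computation for the induced $L$-connection $\nabla_0$ on $E$, and then to invoke the comparison between Lie algebroid primary classes and ordinary Chern classes recalled in Section 2. I would begin by writing $ch_k(\widetilde{\nabla}_\nu) = \tfrac{1}{k!}\op{tr}\bigl(\At(\widetilde{\nabla}_\nu)^{\wedge k}\bigr)$ as the standard polynomial in the Atiyah class; by Proposition \ref{prop:dgLconn} the connection $\widetilde{\nabla}_\nu$ is flat, so Proposition \ref{prop:atiyahproperties} places this representative in $\Omega^k_{dR_L(J_L),cl}$. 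I would then transport it along the quasi-isomorphism $\Omega^k_{dR_L(J_L),cl} \simeq C^{\ge k} L$ of Proposition \ref{prop:dRL}, whose underlying map is ultimately induced by the augmentation $p \co J_L \to C^\infty_X$.

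The crux of the argument is Proposition \ref{prop:connfromjets}: the constant term (in the jet direction) of $\At(\widetilde{\nabla}_\nu)$ is exactly the curvature $R_{\nabla_0}$ of the induced $L$-connection $\nabla_0 = p \circ \widetilde{\nabla}_\nu \circ J_L$ on $E$. Because the augmentation is a map of commutative algebras, it commutes with wedge products and with the trace on endomorphisms, so the image of $\tfrac{1}{k!}\op{tr}\bigl(\At(\widetilde{\nabla}_\nu)^{\wedge k}\bigr)$ in $C^{\ge k} L$ is precisely the usual Chern-Weil representative $\tfrac{1}{k!}\op{tr}\bigl(R_{\nabla_0}^{\wedge k}\bigr)$ built from $\nabla_0$ in the $L$-de Rham complex. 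In other words, $ch_k(\widetilde{\nabla}_\nu)$ matches, under the identification of Proposition \ref{prop:dRL}, the $k$-th Chern character of $(E, \nabla_0)$ viewed as an $L$-cohomology class.

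To finish, I would invoke the general fact recalled in Section 2 (following Fernandes \cite{Fernandes}): the primary characteristic classes built from \emph{any} $L$-connection on a vector bundle $E$ agree in $H^\ast_L(X)$ with the $\rho^\ast$-image of the ordinary Chern classes computed from a standard connection on $E$. Applying this to $\nabla_0$ yields the desired equality $ch_k(\widetilde{\nabla}_\nu) = \rho^\ast ch_k(E)$.

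The main obstacle is the precise tracking of representatives through the quasi-isomorphism of Proposition \ref{prop:dRL}: one must verify that passing to the constant term in the $L$-jets commutes with both the trace and the wedge product on endomorphism-valued $1$-forms, and that the endomorphism bundle structure on $dR_L(J_L(\sE))$ is compatible with the splitting $i_\nu$ of Lemma \ref{lem:jetiso}. This is the $L$-algebroid analogue of the computation carried out in Section 11 of \cite{GG1}; the essential point is that all the relevant identifications are induced by the augmentation $p \co J_L \to C^\infty_X$, so compatibility becomes formal once the isomorphisms of Proposition \ref{prop:BgXL} and Lemma \ref{lem:jetiso} are unwound.
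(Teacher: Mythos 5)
Your proposal is correct and follows essentially the same route as the paper: it rests on Proposition \ref{prop:connfromjets} (the constant term of $\At(\widetilde{\nabla}_\nu)$ is the curvature of the induced $L$-connection $\nabla_0$), the identification of Proposition \ref{prop:dRL}, and the connection-independence of Chern-Weil together with the fact that primary $L$-classes are $\rho^\ast$ of the ordinary Chern classes. You simply spell out the representative-tracking through the augmentation in more detail than the paper's brief argument does.
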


\begin{proof}
At the level of cohomology, the output of the Chern-Weil construction is independent of the choice of connection.  Hence the corollary follows immediately.  Note that when computing primary invariants we can assume without loss of generality that any $L$-connection is actually the pullback of a $T_X$-connection, i.e., the usual notion of connection on the vector bundle $E$ over $X$.
\end{proof}

\appendix

\section{The $L$-jets functor}\label{sect:Ljets}

The $\infty$-jet of a smooth function provides a coordinate-free way to work with its Taylor expansion (more accurately, its Taylor expansion at every point). The jet bundle $J \to X$ is an infinite-rank vector bundle equipped with a canonical flat connection such that a section is flat if and only if it is the $\infty$-jet of a smooth function. Following \cite{CRVDB}, we recall the corresponding construction of the jets of a Lie algebroid $\rho \co L \to T_X$. 

We define the {\it sheaf of $L$-jets} $J_L$ by 
\[
J_L \overset{def}{=} {\mathcal Hom}_{C^\infty_X} ( \cU_L , C^\infty_X),
\]
where ${\mathcal Hom}_{\cinf_X}$ denotes the sheaf-hom of $\cinf_X$-module sheaves and $\cU_L$ is the universal enveloping algebra of the Lie algebroid $L$. Since $\cU_L$ is canonically filtered, so are the $L$-jets $J_L$.  The coalgebra structure on $\cU_L$ equips $J_L$ with the structure of a commutative algebra.  

This definition extends naturally: for $\sE$ a $\cinf_X$-module, we define
\[
J_L (\sE) \overset{def}{=} {\mathcal Hom}_{C^\infty_X} (\cU_L , \sE),
\]
and call it the {\it $L$-jets of $\sE$}.

The following result from Section 5.3 of \cite{CRVDB} will play a useful role for us.

\begin{prop}\label{prop:Ljets}
\mbox{}
\begin{itemize}
\item[(a)] $J_L$ is a commutative $\cinf_X$-module.
\item[(b)] For a $\cinf_X$-module $\sE$, its $L$-jets $J_L (\sE)$ is a $J_L$-module.
\end{itemize}
\end{prop}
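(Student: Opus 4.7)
The plan is to leverage the standard fact that the universal enveloping algebra $\cU_L$ of a Lie algebroid is a cocommutative coalgebra over $\cinf_X$ (indeed a full Hopf algebroid, as reviewed in Section 5.3 of \cite{CRVDB}). Concretely, $\cU_L$ carries a $\cinf_X$-linear coproduct $\Delta \co \cU_L \to \cU_L \otimes_{\cinf_X} \cU_L$ and counit $\epsilon \co \cU_L \to \cinf_X$ uniquely characterized by $\Delta(X) = X \otimes 1 + 1 \otimes X$, $\epsilon(X) = 0$ for $X \in L$, together with $\Delta(f) = f \otimes 1$, $\epsilon(f) = f$ for $f \in \cinf_X$. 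Coassociativity and cocommutativity are verified by direct checking on these algebra generators.

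For part (a), I dualize the coproduct: given $\phi, \psi \in J_L$, set
\[
\phi \cdot \psi := m_{\cinf_X} \circ (\phi \otimes \psi) \circ \Delta,
\]
where $m_{\cinf_X}$ denotes multiplication on $\cinf_X$. This is a well-defined $\cinf_X$-linear map $\cU_L \to \cinf_X$, hence an element of $J_L$. Cocommutativity and coassociativity of $\Delta$ translate into commutativity and associativity of this product, and $\epsilon$ serves as the unit $1_{J_L}$. The $\cinf_X$-algebra structure on $J_L$ is inherited via pre-composition with the unit $\cinf_X \hookrightarrow \cU_L$, and is compatible with the multiplication because $\Delta$ is a $\cinf_X$-algebra map.

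Part (b) is a straightforward extension of the same recipe: for $\phi \in J_L$ and $\sigma \in J_L(\sE)$, define
\[
(\phi \cdot \sigma)(D) := \phi(D_{(1)}) \cdot \sigma(D_{(2)}),
\]
in Sweedler notation $\Delta(D) = \sum D_{(1)} \otimes D_{(2)}$, using the $\cinf_X$-action on $\sE$ in the right-hand factor. Coassociativity of $\Delta$ again yields the associativity axiom $(\phi \cdot \psi) \cdot \sigma = \phi \cdot (\psi \cdot \sigma)$, and the counit axiom gives $1_{J_L} \cdot \sigma = \sigma$, so $J_L(\sE)$ is a $J_L$-module.

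The main care needed is bookkeeping of the $\cinf_X$-bimodule structure on $\cU_L$: the commutation relation $Xf - fX = \rho(X)(f)$ prevents the left and right actions from coinciding, and one must verify that $\Delta$ genuinely factors through the balanced tensor product $\cU_L \otimes_{\cinf_X} \cU_L$. Once this (standard) Hopf algebroid structure of \cite{CRVDB} is granted, both parts of the proposition are formal consequences of the coalgebra/algebra duality indicated above.
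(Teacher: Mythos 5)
Your proposal is correct and matches the paper's approach: the paper gives no proof of this proposition, quoting it from Section 5.3 of \cite{CRVDB}, and its appendix text invokes exactly your mechanism, namely that the coproduct and counit on $\cU_L$ dualize to the commutative algebra structure on $J_L$ and, together with the $\cinf_X$-action on $\sE$, to the $J_L$-module structure on $J_L(\sE)$. The bimodule bookkeeping you flag is the right point of care (the balanced tensor product $\cU_L \otimes_{\cinf_X} \cU_L$ is not itself an algebra, so the multiplicative extension of $\Delta$ lives in the Takeuchi-type subalgebra, while the dual product on $J_L$ descends because $\phi$ and $\psi$ are $\cinf_X$-linear for the module structures used in the balancing), and handling it is exactly what the cited reference does.
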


The $L$-jets have a natural flat $L$-connection $\nabla$ given by
\[
\nabla_X (\phi)(D) = \rho (X) (\phi (D)) - \phi (X D),
\]
for $X \in L$, $\phi \in J_L$, and $D \in \cU_L$. We then have a commutative $\Omega_L$ dg algebra given by
\[
dR_L (J_L) \overset{def}{=} (\Omega_L \otimes_{C^\infty_X} J_L, \nabla).
\]
More generally, we obtain a functor from left $\cU_L$ modules to left $\Omega_L$ modules which we denote by $dR_L$. It is standard, that we have identifications
\[
\cU_{T_X} \cong D_X \quad \text{and} \quad C^\ast T_X \cong \Omega_X.
\]
So for $L=T_X \xrightarrow{\op{Id}} T_X$, we recover the standard infinity jet and de Rham functors which are basic constructions in the theory of D-modules.

\section{Atiyah Classes}

There is a reformulation of Atiyah classes due to Kapranov \cite{Kap} and Calaque-van den Bergh \cite{CVDB}. It generalizes the original setting of Atiyah \cite{At}. Indeed, if we let $R= (\Omega^{0,\ast}(X), \overline{\partial})$, $M= (\Omega^{0,\ast} (E), \overline{\partial})$, and $\nabla$ be a connection on the holomorphic bundle $E \to X$ which satisfies Leibniz with respect to $\partial$, then the formalism below exhibits the Atiyah class $\At(\nabla)$ as the obstruction to extending $\nabla$ to a holomorphic connection.

Let $R = (R^\#, d)$ be a commutative dg algebra over a base ring $k$. The underlying graded algebra is denoted $R^\#$. We denote the K\"ahler differentials of $R$ by $\Omega^1_R$ and let $d_{dR}: R \rightarrow \Omega^1_R$ denote the universal derivation, which extends to a differential $d_{dR} : \Omega^k_R \to \Omega^{k+1}_R$.

\begin{definition}
Let $M$ be an $R$-module that is projective over $R^\#$. A {\em connection} on $M$ is a $k$-linear map $\nabla \co M \rightarrow M \ot_R \Omega^1_R$ such that
\[
\nabla(r \cdot m) = (d_{dR} r) m + (-1)^{|r|} r \nabla m, 
\]
for all $r \in R$ and $m \in M$.
\end{definition}

A connection may not be compatible  with the differential $d_M$ on $M$, and the Atiyah class is precisely the obstruction to compatibility between $\nabla$ and the dg $R$-module structure on $M$. 

\begin{definition}
The {\em Atiyah class} of $\nabla$ is the class in $\Omega^1_R \ot_R \End_R(M)$ given by
\[
\At(\nabla) = [\nabla, d] = \nabla \circ d_M - d_{\Omega^1_R \ot_R M} \circ \nabla.
\]
\end{definition}

Assume that we have a trace map $\Tr: \End_R (M) \to R$, then we define the {\it Chern character} of $\nabla$ by
\[
ch(\nabla) := \Tr \exp \left( \frac{\At(\nabla)}{-2\pi i} \right).
\]
We let $ch_k(\nabla)$ denote the homogeneous component of $ch(\nabla)$ in $\Omega^k_R$. Hence, 
\[
ch_k(\nabla) = \Tr \left(  \frac{1}{k!(-2\pi i)^k} \At(\nabla)^k \right).
\]
Note that $ch_k (\nabla)$ is an element of degree $k$ in $\Omega^k_R$.

We recall the following from \cite[Corollary 6.7, Proposition 6.8, and Corollary 7.2]{GG1}.

\begin{prop}\label{prop:atiyahproperties} For $\At (\nabla) \in \Omega^1_R \ot_R \End_R(M)$ and $ch_k (\nabla) \in \Omega^k_R$, we have the following.
\begin{itemize}
\item  $\At (\nabla)$ is a cocycle, i.e., $d_{\Omega^1_R \ot_R \End_R (M)} \At (\nabla) =0$;
\item If $\nabla^2 =0$ and $\nabla^{\End}$ is the induced connection on $\End_R (M)$, then $\At (\nabla)$ is horizontal, i.e., $\nabla^{\End} \At (\nabla) =0$;
\item If $\nabla^2 =0$, then $ch_k (\nabla)$ is closed under both $d_{dR}$ and $d_{\Omega_R}$.
\end{itemize}
\end{prop}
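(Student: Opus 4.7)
The plan is to treat $\nabla$ as a degree-zero $k$-linear operator that raises form degree by one, and $d_M$ as a degree-$1$ operator, and to extract all three claims from graded commutator identities — chiefly the graded Jacobi identity together with the vanishings $d_M^2 = 0$ and (when assumed) $\nabla^2 = 0$. First I would extend $\nabla$ from $M$ to all of $M \otimes_R \Omega^*_R$ via the graded Leibniz rule; this places the formula $\At(\nabla) = [\nabla, d_M]$ inside a genuine graded-commutator algebra of operators on $M \otimes_R \Omega^*_R$ and, as a happy byproduct, explains why this commutator is $R$-linear even though $\nabla$ alone is not.

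With that setup, (1) reduces to the identity $[d_M, [\nabla, d_M]] = 0$, since the total differential on $\Omega^1_R \otimes_R \End_R(M)$ acts on $\End_R(M)$ as $[d_M,-]$ (and anticommutes with the internal $R$-differential on $\Omega^1_R$). By graded Jacobi, $[d_M, [d_M, \nabla]] = \tfrac{1}{2}[[d_M, d_M], \nabla] = [d_M^2, \nabla] = 0$. For (2), the hypothesis $\nabla^2 = 0$ makes $\nabla^{\End}$ flat and identifies it with $[\nabla,-]$ on $\End_R(M)$; then $\nabla^{\End} \At(\nabla) = [\nabla, [\nabla, d_M]] = \tfrac{1}{2}[[\nabla, \nabla], d_M] = [\nabla^2, d_M] = 0$, once again by graded Jacobi.

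For (3), I would combine (1) and (2) with the Leibniz rule to propagate the cocycle and horizontality properties from $\At(\nabla)$ to $\At(\nabla)^k$, and then push down through the trace. Explicitly, $d_{\Omega_R} \circ \Tr = \Tr \circ d_{\Omega_R}$ together with the $d_{\Omega_R}$-cocycle property of $\At(\nabla)^k$ gives $d_{\Omega_R}\, ch_k(\nabla) = 0$; the analogous identity $d_{dR} \circ \Tr = \Tr \circ \nabla^{\End}$ — which holds because $\Tr$ is $R$-linear and $\nabla^{\End}$ restricts to $d_{dR}$ on scalars — combined with horizontality of $\At(\nabla)^k$, yields $d_{dR}\, ch_k(\nabla) = 0$.

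The main obstacle I anticipate is sign bookkeeping: ensuring that the extension of $\nabla$ to $M \otimes_R \Omega^*_R$ interacts correctly with both the internal differential $d_M$ and the de Rham differential $d_{dR}$, and verifying the trace identity $d_{dR} \circ \Tr = \Tr \circ \nabla^{\End}$ with its correct signs in the dg (rather than ungraded) setting. Once these signs are pinned down, all three claims collapse to one-line graded-commutator computations.
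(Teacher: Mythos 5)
Your argument is correct and is essentially the standard one; note that the paper gives no proof of this proposition at all, simply recalling it from \cite{GG1}, where the statements are established by the same graded-commutator computations you outline ($\At(\nabla) = [\nabla, d_M]$ viewed as an operator identity, graded Jacobi plus $d_M^2 = 0$ for the cocycle property, and $[\nabla,\nabla] = 2\nabla^2 = 0$ for horizontality/Bianchi). The one spot where your justification is thinner than needed is the trace identity $d_{dR} \circ \Tr = \Tr \circ \nabla^{\End}$: $R$-linearity of $\Tr$ together with agreement on scalars only shows that the difference $d_{dR} \circ \Tr - \Tr \circ \nabla^{\End}$ is $R$-linear, not that it vanishes. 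To close this, use that $M$ is finitely generated projective over $R^\#$, so $\End_R(M) \cong M^\vee \ot_R M$ and the trace annihilates graded commutators (equivalently, compute in a local frame where $\nabla = d_{dR} + A$ and $\Tr[A,\alpha] = 0$); with that supplied, parts (1)--(3) go through exactly as you describe.
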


\bibliographystyle{amsalpha}
\bibliography{algebroids}

\newcommand{\etalchar}[1]{$^{#1}$}
\providecommand{\bysame}{\leavevmode\hbox to3em{\hrulefill}\thinspace}
\providecommand{\MR}{\relax\ifhmode\unskip\space\fi MR }
\providecommand{\MRhref}[2]{%
  \href{http://www.ams.org/mathscinet-getitem?mr=#1}{#2}
}
\providecommand{\href}[2]{#2}
\begin{thebibliography}{CRvdB10}

\bibitem[Ati57]{At}
M.~F. Atiyah, \emph{Complex analytic connections in fibre bundles}, Trans.
  Amer. Math. Soc. \textbf{85} (1957), 181--207. \MR{0086359 (19,172c)}

\bibitem[Cos]{CosWG2}
Kevin Costello, \emph{A geometric construction of the {W}itten genus, {II}},
  available at \href{http://front.math.ucdavis.edu/1112.0816}{arXiv:1112.0816}.

\bibitem[Cos11]{Cos1}
\bysame, \emph{Renormalization and effective field theory}, Mathematical
  Surveys and Monographs, vol. 170, American Mathematical Society, Providence,
  RI, 2011. \MR{2778558}

\bibitem[CPT{\etalchar{+}}]{CPTVV}
Damien Calaque, Tony Pantev, Bertrand To\"{e}n, Michel Vaqui\'{e}, and Gabriele
  Vezzosi, \emph{Shifted {P}oisson structures and deformation quantization},
  available at the \href{http://arxiv.org/abs/1506.03699}{arXiv:1509.03699}.

\bibitem[CRvdB10]{CRVDB}
Damien Calaque, Carlo~A. Rossi, and Michel van~den Bergh, \emph{Hochschild
  (co)homology for {L}ie algebroids}, Int. Math. Res. Not. IMRN (2010), no.~21,
  4098--4136. \MR{2738352 (2011m:14026)}

\bibitem[CVdB10]{CVDB}
Damien Calaque and Michel Van~den Bergh, \emph{Hochschild cohomology and
  {A}tiyah classes}, Adv. Math. \textbf{224} (2010), no.~5, 1839--1889.
  \MR{2646112 (2011i:14037)}

\bibitem[Fer02]{Fernandes}
Rui~Loja Fernandes, \emph{Lie algebroids, holonomy and characteristic classes},
  Adv. Math. \textbf{170} (2002), no.~1, 119--179. \MR{1929305 (2004b:58023)}

\bibitem[Get09]{Getzler}
Ezra Getzler, \emph{Lie theory for nilpotent {$L\sb \infty$}-algebras}, Ann. of
  Math. (2) \textbf{170} (2009), no.~1, 271--301. \MR{2521116 (2010g:17026)}

\bibitem[GG]{GGAlgd}
Ryan Grady and Owen Gwilliam, \emph{Lie algebroids as ${L}_\infty$ spaces},
  available at.

\bibitem[GG14]{GG1}
\bysame, \emph{One-dimensional {C}hern--{S}imons theory and the \^{A} genus},
  Algebr. Geom. Topol. \textbf{14} (2014), no.~4, 419--497. \MR{3331615}

\bibitem[GG15]{GGLoop}
\bysame, \emph{{$L\sb \infty$} spaces and derived loop spaces}, New York J.
  Math. \textbf{21} (2015), 231--272. \MR{3358542}

\bibitem[Kap99]{Kap}
M.~Kapranov, \emph{Rozansky-{W}itten invariants via {A}tiyah classes},
  Compositio Math. \textbf{115} (1999), no.~1, 71--113. \MR{1671737
  (2000h:57056)}

\bibitem[LM95]{LadaMarkl}
Tom Lada and Martin Markl, \emph{Strongly homotopy {L}ie algebras}, Comm.
  Algebra \textbf{23} (1995), no.~6, 2147--2161. \MR{1327129}

\bibitem[LS93]{LadaJim}
Tom Lada and Jim Stasheff, \emph{Introduction to {SH} {L}ie algebras for
  physicists}, Internat. J. Theoret. Phys. \textbf{32} (1993), no.~7,
  1087--1103. \MR{1235010}

\bibitem[Lur]{LurieDAGX}
Jacob Lurie, \emph{Derived {A}lgebraic {G}eometry {X}: {F}ormal {M}oduli
  {P}roblems}, available at the
  \href{http://www.math.harvard.edu/~lurie/}{author's homepage}.

\bibitem[Lur10]{LurieICM}
\bysame, \emph{Moduli problems for ring spectra}, Proceedings of the
  {I}nternational {C}ongress of {M}athematicians. {V}olume {II}, Hindustan Book
  Agency, New Delhi, 2010, pp.~1099--1125. \MR{2827833}

\bibitem[Mac05]{Mackenzie}
Kirill C.~H. Mackenzie, \emph{General theory of {L}ie groupoids and {L}ie
  algebroids}, London Mathematical Society Lecture Note Series, vol. 213,
  Cambridge University Press, Cambridge, 2005. \MR{2157566 (2006k:58035)}

\bibitem[PV]{PanGab}
Tony Pantev and Gabriele Vezzosi, \emph{Symplectic and {P}oisson derived
  geometry and deformation quantization}, available at the
  \href{http://arxiv.org/abs/1603.02753}{arXiv:1603.02753}.

\bibitem[Rin63]{Rinehart}
George~S. Rinehart, \emph{Differential forms on general commutative algebras},
  Trans. Amer. Math. Soc. \textbf{108} (1963), 195--222. \MR{0154906 (27
  \#4850)}

\bibitem[Tua]{Tu1}
Junwu Tu, \emph{Homotopy l-infinity spaces}, available at
  \href{http://arxiv.org/abs/1411.5115}{arXiv:1411.5115}.

\bibitem[Tub]{Tu2}
\bysame, \emph{Homotopy l-infinity spaces and kuranishi manifolds, i:
  categorical structures}, available at
  \href{http://arxiv.org/abs/1602.00150}{arXiv:1602.00150}.

\bibitem[Yu]{Yu15}
Shilin Yu, \emph{Dolbeault dga and ${L}_\infty$-algebroid of the formal
  neighborhood}, available at
  \href{http://arxiv.org/abs/1507.07528}{arXiv:1507.07528}.

\end{thebibliography}

\end{document}